 \newtheorem{thm}{Theorem}[section]
 \newtheorem{defin}[thm]{Definition}
 \newtheorem{lem}[thm]{Lemma}
 \newtheorem{prop}[thm]{Proposition}
 \newtheorem{cor}[thm]{Corollary}
 \theoremstyle{definition} % added by gue to make text in rem and ex non-italic
 \newtheorem{rem}[thm]{Remark}
 \newtheorem{ex}[thm]{Example}
 \newcommand{\bthm}{\begin{thm}}
 \newcommand{\ethm}{\end{thm}}
 \newcommand{\bd}{\begin{defin}}
 \newcommand{\ed}{\end{defin}}
 \newcommand{\blem}{\begin{lem}}
 \newcommand{\elem}{\end{lem}}
 \newcommand{\bcor}{\begin{cor}}
 \newcommand{\ecor}{\end{cor}}
 \newcommand{\bprop}{\begin{prop}}
 \newcommand{\eprop}{\end{prop}}
 \newcommand{\brem}{\begin{rem} \rm}
 \newcommand{\erem}{\end{rem}}
 \newcommand{\bex}{\begin{ex} \rm}
 \newcommand{\eex}{\end{ex}}
 \newcommand{\pr}{\noindent{\bf Proof. }}
 \newcommand{\ep}{\nolinebreak{\hspace*{\fill}$\Box$ \vspace*{0.25cm}}}
 \newcommand{\beq}{\begin{equation}}
 \newcommand{\eeq}{\end{equation} }
 \newcommand{\bea}{\begin{eqnarray}}
 \newcommand{\eea}{\end{eqnarray}}
 \newcommand{\beas}{\begin{eqnarray*}}
 \newcommand{\eeas}{\end{eqnarray*}}
 \newcommand{\beqs}{\begin{equation*}}
 \newcommand{\eeqs}{\end{equation*}}
 \newcommand{\bi}{\begin{itemize}}
 \newcommand{\ei}{\end{itemize}}
 \newcommand{\ben}{\begin{enumerate}}
 \newcommand{\een}{\end{enumerate}}
 \newcommand{\ba}{\begin{array}}
 \newcommand{\ea}{\end{array}}
 \newcommand{\R}{\mathbb R}
 \newcommand{\N}{\mathbb N}
 \newcommand{\Rt}{\widetilde{\mathbb R}}
 \newcommand{\sig}{\sigma}
 \newcommand{\de}{\delta}
 \newcommand{\la}{\lambda}
 \newcommand{\al}{\alpha}
 \newcommand{\be}{\beta}
 \newcommand{\om}{\omega}
 \newcommand{\col}{\colon}
 \newcommand{\dual}[2]{\langle #1, #2 \rangle}
 \renewcommand{\labelenumi}{(\roman{enumi})}
 \newcommand{\diff}[1]{\frac{d}{d#1}}
 \newcommand{\cA}{\ensuremath{{\mathcal A}}}
 \newcommand{\cC}{\ensuremath{{\mathcal C}}}
 \newcommand{\cE}{\ensuremath{{\mathcal E}}}
 \newcommand{\cG}{\ensuremath{{\mathcal G}}}
 \newcommand{\cN}{\ensuremath{{\mathcal N}}}
 \newcommand{\cP}{\ensuremath{{\mathcal P}}}
 \newcommand{\eps}{\varepsilon}
 \newcommand{\vphi}{\varphi}
 \newcommand{\Vol}{\ensuremath{\mbox{Vol\,}}}
\newcommand{\norm}[1]{\| #1\|}
\newcommand{\opnorm}[1]{\| #1\|_{\text{op}}}
\newcommand{\vol}{\mathrm{vol}}
\begin{document}

 \title{A regularization approach to non-smooth symplectic geometry
}

 \author{G\"unther H\"ormann
         \footnote{Faculty of Mathematics, University of Vienna,
         Oskar-Morgenstern-Platz 1, A-1090 Vienna, Austria,
         Electronic mail: guenther.hoermann@univie.ac.at}\\
         Sanja Konjik
         \footnote{Faculty of Sciences, Department of Mathematics and Informatics, University of Novi Sad,
         Trg Dositeja Obradovi\'ca 4, 21000 Novi Sad, Serbia,
         Electronic mail: sanja.konjik@dmi.uns.ac.rs}\\
         Michael Kunzinger
         \footnote{Faculty of Mathematics, University of Vienna,
         Oskar-Morgenstern-Platz 1, A-1090 Vienna, Austria,
         Electronic mail: michael.kunzinger@univie.ac.at.}
       }

 \date{}
 \maketitle

 \begin{abstract}
We introduce non-smooth symplectic forms on manifolds and describe
corresponding Poisson structures on the algebra of Colombeau generalized
functions. This is achieved by establishing an extension of the classical
map of smooth functions to Hamiltonian vector fields to the setting of
non-smooth geometry. For mildly singular symplectic
forms, including the continuous non-differentiable case, we prove the
existence of generalized Darboux coordinates in the sense of a local
non-smooth pull-back to the canonical symplectic form on the cotangent
bundle.
 \vskip5pt
 \noindent  
 {\bf Mathematics Subject Classification (2010):} 53D05,46F30
 \vskip5pt
 \noindent
 {\bf Keywords:} Non-smooth symplectic geometry, Darboux Theorem, Colombeau algebra  
 \end{abstract}

%%%%%%%%%%%%%%%%%%%%%%%%%%%%%%%%%%%%%%%%%%%%%%%%%%%%%%%%%%%%%%%%%%
 \section{Introduction} \label{sec:intro}
%%%%%%%%%%%%%%%%%%%%%%%%%%%%%%%%%%%%%%%%%%%%%%%%%%%%%%%%%%%%%%%%%%

Regularization approaches to non-smooth differential geometry and its
applications to mathematical physics have been successfully developed in
the context of Colombeau-type generalized functions and tensor fields 
(cf., e.g., \cite{C:84,C85,NPS98,O:92,book}). In the present paper we take
up the study of generalized symplectic structures based on previous
investigations  of linear symplectic structures on modules over Colombeau
generalized numbers in \cite{HKK:14}. Our main motivations for the
systematic development of a non-smooth symplectic differential geometry
are driven by deeper applications in microlocal analysis, classical
mechanics or general relativity in terms of analysis on semi-Riemannian
manifolds:

First, modern research on propagation of singularities for
(pseudo-) differential
operators with non-smooth principal symbol on a manifold %demands to grasp
is based on an understanding of
the corresponding non-smooth Hamiltonian vector field and its generalized
bicharacteristic flow  (cf.\ \cite{GH:05,O:09}) as well as on an analysis of 
microlocal mapping properties of generalized Fourier integral solution
operators in terms of the wave front sets of their kernels, which are to
be  generalized Lagrangian submanifolds  (cf.\
\cite{GHO:09,GO:14}).

Second, non-smooth symplectic structures arise in the study of the
geodesic flow in classical mechanics or in general relativity in the
context of non-smooth metrics or space-times, unified in models of 
generalized semi-Riemannian manifolds $(M,g)$. The geodesic flow can then
be described in terms of symplectomorphisms on a generalized symplectic
manifold $(TM, \sig)$. The basic construction is as follows: The geodesic
flow is defined by the non-smooth geodesic spray $G$, given as a vector
field on $TM$  in coordinates $(x,v)$ by
$$
   G(x,v) =  \sum_{1 \leq j \leq n}  v_j\, \partial_{x_j} -
   \sum_{1 \leq j,k,l \leq n} \Gamma^j_{k l}\, v_k\, v_l\, \partial_{v_j}
$$
with  the Christoffel symbols $\Gamma^j_{k l}$ (cf.\
\cite{AM:78}). Non-degeneracy of the metric provides a `non-smooth
diffeomorphism' $g^\flat \col TM \to T^* M$. The latter allows one to
pull-back the canonical symplectic form to define a non-smooth symplectic
form $\sig$  on $TM$, which locally reads
$$
  \sig = \sum_{1 \leq i,j \leq n} g_{ij}\, dx_i \wedge dv_j +
  \sum_{i,j,k} \frac{\partial g_{ij}}{\partial{x_k}} \, v_i\, dx_j \wedge
dx_k.
$$
Note that in a sense, here the `generalized symplectomorphism'
$(g^\flat)^{-1}$ provides Darboux coordinates for $(TM, \sig)$.

\subsection{Generalized differential geometry on smooth manifolds}

In this section we briefly recall some notions from Colombeau's theory
of nonlinear generalized functions and non-smooth differential geometry 
in this setting. For details we refer to \cite{book}.

Let $M$ be a smooth (Hausdorff and second countable) manifold of dimension $n$.
Colombeau generalized functions on $M$ are introduced as 
equivalence classes $u=[(u_\eps)_\eps]$ of moderate modulo negligible
nets in $\cC^\infty(M)$, where moderateness, resp. negligibility, are 
characterized by
\beas
{\mathcal E}_M(M)&:=&\{ (u_\eps)_\eps\in{\mathcal C}^\infty(M)^{]0,1]}:\ \forall
K\subset\subset M,\ \forall L\in \cP(M)\ \exists N\in\N:
 \sup_{x\in K}|Lu_\eps(x)|=O(\eps^{-N})\}, \\
{\mathcal N}(M)&:=& \{ (u_\eps)_\eps\in\cE_M(M):\ \forall K\subset\subset M,\ 
\forall m \in\N_0:\ \sup_{x\in K}|u_\eps(x)|=O(\eps^{m})\}, 
\eeas
and $\cP(M)$ is the space of linear differential operators on $M$. 
Then the Colombeau algebra $\cG(M)$ of generalized functions on $M$ 
is defined as is the quotient $\cE_M(M)/\cN(M)$; it is 
a fine sheaf of differential algebras with respect to the
Lie derivative along smooth vector fields.

Colombeau generalized functions on $M$ are uniquely determined by 
their values on compactly supported generalized points on $M$, which 
are denoted by $\widetilde{M}_c$ and defined as follows.
In the space $M_c$ of nets $(x_\eps)_\eps\in M^{]0,1]}$ with the
property that $x_\eps$ stays in a fixed compact set for $\eps$ small,
one introduces an equivalence relation $\sim$: 
$(x_\eps)_\eps\sim (y_\eps)_\eps :\Leftrightarrow 
d_h(x_\eps,y_\eps)=O(\eps^m)$,
for all $m>0$, $(x_\eps)_\eps,\, (y_\eps)_\eps\in M_c$, 
and distance function $d_h$ induced on
$M$ by any Riemannian metric $h$. Then the space of compactly
supported generalized points on $M$ is the quotient space
$\widetilde{M}_c:=M_c/\sim$, with elements $\tilde{x}=[(x_\eps)_\eps]$.

Generalized numbers
are equivalence classes $r=[(r_\eps)_\eps]$ of moderate
nets of real numbers 
$\{(r_\eps)_\eps \in \R^{]0,1]} : \exists N\in\N: |r_\eps| = O(\eps^{-N})\}$ 
modulo the set of negligible nets 
$\{(r_\eps)_\eps \in \R^{]0,1]} : \forall m\in\N_0: |r_\eps| = O(\eps^{m})\}$.
$\widetilde{\R}$ is the ring of constants in the Colombeau algebra of generalized
functions on $\R$. A generalized number $r\in\widetilde{\R}$ is called strictly
nonzero if $|r_\eps|\geq \eps^m$ for some $m\in\N$ and $\eps$ small; it is 
called strictly positive if $r_\eps\geq \eps^m$ for some $m\in\N$ and $\eps$ small.
Invertible elements of the ring $\widetilde{\R}$ are precisely those which
are strictly nonzero. Similarly, $u\in \cG(M)$ has a multiplicative inverse if and
only if it is strictly nonzero in the sense that for any $K\Subset M$ there exists
some $m$ such that $\inf_{x\in K}|u_\eps(x)|\ge \eps^m$ for $\eps$ small. In turn,
this is equivalent to $u(\tilde x)$ being invertible in $\Rt$ for any compactly
supported generalized point $\tilde x$.
 
Let $E$ be a vector bundle over $M$, and denote by $\Gamma(E)$ the
space of smooth sections of $E$.
The space of Colombeau generalized sections of $E$, $\Gamma_\cG(E)$, 
is defined as the quotient $\Gamma_{\cE_M}(E)/\Gamma_{\cN}(E)$, where
\beas
\Gamma_{\cE_M}(E)&:=& \{ (s_\eps)_{\eps}\in \Gamma(E)^{]0,1]} :
      \ \forall L\in \cP(E)\, \forall K\subset\subset M \, \exists N\in \N:
\sup_{x\in K}\|Lu_\eps(x)\|_h = O(\eps^{-N})\},\\
\Gamma_{\cN}(E)&:=& \{ (s_\eps)_{\eps}\in \Gamma_{\cE_M}(E) :
      \ \forall K\subset\subset M \, \forall m\in \N:
\sup_{x\in K}\|u_\eps(x)\|_h = O(\eps^{m})\},
\eeas
Here $\cP(E)$ is the space of linear differential operators 
$\Gamma(E)\to\Gamma(E)$, and $\|\,\|_h$ is the norm on 
the fibers of $E$ induced by any Riemannian metric $h$ on $M$.
The ${\mathcal C}^\infty(M)$-module of Colombeau generalized sections of $E$ 
is projective and finitely generated, and 
can be characterized
by the $\cC^\infty(M)$-module isomorphisms:
$\Gamma_{\cG}(E)=\cG(M)\otimes_{\cC^\infty(M)}\Gamma(E)
= L_{\cC^\infty(M)}(\Gamma(E^\ast),\cG(M))$.

In case $E$ is a tensor bundle $T_s^r(M)$ we use the notation
$\cG_s^r(M)$ for $\Gamma_{\cG}(T_s^r(M))$. Moreover, in the case of 
the tangent bundle $TM$ the generalized sections are the
generalized vector fields, and will be denoted by $\mathfrak{X}_\cG(M)$,
while in the case of the cotangent bundle $T^\ast M$ we write 
$\Omega_\cG^1(M)$ for the corresponding generalized sections (generalized one-forms). 
Also, when $E$ is the vector bundle of exterior 
$k$-forms on $TM$, i.e., $E=\Lambda^kT^\ast M$, then the generalized
sections are the generalized $k$-forms on $M$, and are denoted by 
$\Omega_\cG^k(M)$.

Finally, we will also make use of a particular feature of Colombeau's approach, 
namely manifold-valued generalized functions (\cite{K:02,KSV:03}). 
Given manifolds $M$, $N$, the space of c-bounded generalized functions
is the quotient space $\cG[M,N]:=\cE_M[M,N]/\sim$. Here, $\cE_M[M,N]$ is the
set of all nets $(u_\eps)_{\eps\in ]0,1]}$ such that $(f\circ u_\eps)_\eps$
is moderate for every $f\in {\mathcal C}^\infty(N)$. The equivalence relation
$\sim$ is defined as $(u_\eps)_\eps \sim (v_\eps)_\eps$ if for any Riemannian
metric $h$ on $N$, any $m\in \N$ and every $K\Subset M$, 
$\sup_{p\in K}d_h(u_\eps(p),v_\eps(p))=O(\eps^m)$ for $\eps\to 0$.
These generalized functions are called c-bounded since any representative
is bounded, uniformly in $\eps$, on any compact subset of $M$ for $\eps$ small.
They can be composed unrestrictedly, and invertible c-bounded generalized functions
serve as non-smooth analogues of diffeomorphisms in the smooth category
(cf.\ \cite{EG:11,V:11}).

The paper is organized as follows. For the sake of completeness
we first review in Section 2 some results from \cite{HKK:14} 
on symplectic forms on $\Rt$-modules, symplectic bases, 
maps, and submodules. Then we turn to the manifold setting, study various conditions
for skew-symmetry and nondegeneracy of generalized $2$-forms, 
and provide an equivalent characterization of a generalized symplectic 
form on a manifold. Section 3 is devoted to the Darboux theorem
and its generalization for a generalized symplectic form. We
state conditions which imply that a generalized symplectic form
on a manifold looks locally like the canonical symplectic form on 
$T^\ast(\widetilde{\R}^n)$.
In the last Section 4 we introduce notions of generalized Hamiltonian
vector fields and Poisson structures.

%%%%%%%%%%%%%%%%%%%%%%%%%%%%%%%%%%%%%%%%%%%%%%%%%%%%%%%%%%%%%%%%%%
 \section{Generalized symplectic structures} \label{sec:gf}
%%%%%%%%%%%%%%%%%%%%%%%%%%%%%%%%%%%%%%%%%%%%%%%%%%%%%%%%%%%%%%%%%%

%%%%%%%%%%%%%%%%%%%%%%%%%%%%%%%%%%%%%%%%%%%%%%%%%%%%%%%%%%%%%%%%%%
\subsection{Review of symplectic modules over the ring of generalized numbers}\label{subsec:symplmod}
%%%%%%%%%%%%%%%%%%%%%%%%%%%%%%%%%%%%%%%%%%%%%%%%%%%%%%%%%%%%%%%%%%

Here we briefly recall basic notions and results about symplectic $\Rt$-modules
 that are essential for the study of generalized symplectic structures
 on manifolds, and refer to \cite{HKK:14} for an in-depth analysis.
 
 On an $\Rt$-module $V$ we define a symplectic form $\sig:V \times V \to \Rt$
 to be an $\Rt$-bilinear form that is 
 skew-symmetric ($\sig(v,w) = - \sig(w,v), \forall v, w \in V$), 
 and non-degenerate ($\sig(v,w) = 0, \forall w \implies v = 0$).
 We call the pair $(V,\sig)$ a symplectic $\Rt$-module.
 
 As the standard model space for a symplectic $\Rt$-module we take 
 $(T^*({\Rt}^n), \tilde{\omega})$, where $T^*({\Rt}^n)= \Rt^n \times \Rt^n$, 
 and the symplectic form $\tilde{\omega}$ is defined as 
\beq\label{standsymp}
  \widetilde{\omega}((x,\xi),(y,\eta)) = \sum_{j=1}^n y_j \xi_j - 
  \sum_{j=1}^n x_j \eta_j
  = \dual{y}{\xi} - \dual{x}{\eta}
  \qquad \forall (x,\xi), (y,\eta) \in T^*(\Rt^n).
\eeq
 For this symplectic form, the vectors $\{e_1,\ldots,e_n,f_1,\ldots,f_n\}$, $e_j := (\de_j,0)$, $f_j = (0,\de_j)$ 
 ($\de_j$ is the $j^\text{th}$ standard unit vector, $1 \leq j \leq n$),
 form a basis which turns $T^*(\Rt^n)$ into a free module of rank $2n$.
 Moreover, one has
 $$
  \widetilde{\omega}(e_j,e_l) = 0 = \widetilde{\omega}(f_j,f_l), 
  \quad \widetilde{\omega}(f_j,e_l) = \delta_{jl}
  \quad (1 \leq j,l \leq n),
 $$
 with $\delta_{jl}$  the Kronecker delta. A basis satisfying this
 property is called a symplectic basis.
 
 In \cite{HKK:14} we proved that any symplectic free module of finite rank 
 possesses a symplectic basis, which further implies that its rank
 has to be even. Also, we showed that any given ``partial symplectic basis''
 of the free symplectic $\Rt$-module $(V,\sig)$ of finite rank can be extended to a full one,
 i.e., any free set $B:= \{e_i\in V \mid i \in I \} \cup \{f_j\in V \mid j \in J\}$
 ($I, J \subseteq \{ 1,\ldots n\}$) satisfying
$$
  \sig(e_i,e_k) = 0 = \sig(f_j,f_l), \quad \sig(f_j,e_i) = \delta_{ji}
  \quad (i,k \in I; j,l \in J),
$$
 can be extended by vectors $e_i \in V$ ($i \in \{1, \ldots n\} \setminus I$) and
 $f_j \in V$ ($j \in \{1, \ldots n\} \setminus J$) to a symplectic basis 
 $\{e_1, \ldots, e_n, f_1, \ldots, f_n\}$ of $(V,\sig)$.
 
 One of the main results of \cite{HKK:14} is that any free symplectic
 $\Rt$-module of finite rank is symplectomorphic to $(T^*(\Rt^n),\widetilde{\omega})$.
 A symplectomorphism between symplectic $\Rt$-modules $(V_1,\sig_1)$ and $(V_2,\sig_2)$
 is an $\Rt$-linear isomorphism $f:V_1\to V_2$ that preserves symplectic structures, i.e.,
 $$
   \sig_2(f(v_1),f(v_2)) = \sig_1(v_1,v_2), \quad \forall v_1, v_2 \in V. 
 $$
 In case $f$ is an $\Rt$-linear map but not an isomorphism, it
 is called a symplectic map. Every symplectic map is injective.
 A symplectic map $f:V_1\to V_2$ is a symplectomorphism if, in
 addition, $(V_1,\sig_1)$ and $(V_2,\sig_2)$ are free and of equal finite rank.

%%%%%%%%%%%%%%%%%%%%%%%%%%%%%%%%%%%%%%%%%%%%%%%%%%%%%%%%%%%%%%%%%%
\subsection{Manifolds with generalized symplectic forms}\label{subsec:symplmanifolds}
%%%%%%%%%%%%%%%%%%%%%%%%%%%%%%%%%%%%%%%%%%%%%%%%%%%%%%%%%%%%%%%%%%
 
The following definition is the natural extension of the notion of a smooth
symplectic form on a manifold to the setting of Colombeau generalized functions.
 
\bd
A \emph{generalized symplectic form} on the smooth $d$-dimensional manifold $M$ is a closed generalized $2$-form $\sig \in \Gamma_\cG(\Lambda^2T^*M)$ that is non-degenerate, i.e.,  for every chart $(W,\psi)$ and $\tilde{z} \in {\psi(W)}^{\sim}_c$ the $\Rt$-bilinear form $ \psi_* \sig (\tilde{z}) \col \Rt^d \times \Rt^d \to \Rt$  is non-degenerate .
\ed
 
As was already mentioned in the introduction, the fact that $ \psi_* \sig$ is a symplectic form
on $\Rt^d$ implies that $d$ is even, say $d=2n$.

Given any $(0,2)$-tensor field $\alpha$ on $M$ and a coordinate system $(\psi=(x^1,\dots,x^d),U)$ we 
may write $\alpha|_{U} = \alpha_{ij} dx^i\otimes dx^j$ and set $\vol(\alpha)|_U:=\sqrt{|\det (\alpha_{ij})|}$.
As this quantity transforms by multiplication with the Jacobian determinant of the chart transition functions,
we obtain a well-defined $1$-density $\vol(\alpha)$ on $M$. In the case where $\alpha$ is a Riemannian metric
on $M$, $\vol(\alpha)$ is the Riemannian volume density of $\alpha$. A component-wise application of the above
procedure to a generalized $(0,2)$-tensor %(or, in particular, to a generalized $2$-form) 
$\alpha$ yields
a corresponding generalized one-density $\vol(\alpha)\in \Gamma_\cG(\Vol(M))$ (with $\Vol(M)$ the $1$-density bundle 
over $M$).

\bprop \label{skewchar}
Let $\sigma\in\cG_2^0(M)$. Then the following are equivalent:
\begin{itemize}
\item[(i)] $\sigma:\mathfrak{X}(M)\times \mathfrak{X}(M)\to \cG(M)$
is skew-symmetric $($equivalently, $\sigma\in \Gamma_\cG(\Lambda^2T^*M))$ and $\vol(\sigma)$ is strictly positive.
\item[(ii)] For each chart $(W,\psi)$ and for each $\tilde{z}\in\psi(W)^\sim_c$, the map
$\psi_*\sigma(\tilde{z}):\Rt^{d}\times\Rt^{d} \to\Rt$ is skew-symmetric and non-degenerate.
\item[(iii)] $\vol(\sigma)$ is strictly positive and for each relatively compact open set $V\subseteq M$ there
exist a representative $(\sigma_\eps)_\eps$ of $\sigma$ and $\eps_0>0$ such that $\sigma_\eps|_V$ is skew-symmetric
%and non-degenerate 
for all $\eps<\eps_0$. 
\end{itemize}
\eprop

\pr
(i) $\Rightarrow$ (ii)
By the point-value characterization of invertibility of generalized functions (\cite[1.2.55]{book}),
strict positivity of $\vol(\sigma)$ implies that $\det(\psi_*\sigma(\tilde z))$ is invertible in $\Rt$
for each $\tilde z\in M^\sim_c$. Hence by \cite[1.2.41]{book}, $\psi_*\sigma(\tilde z): 
\Rt^{d}\times\Rt^{d} \to\Rt$ is non-degenerate. Skew-symmetry of
$\psi_*\sigma(\tilde{z})$ follows by inserting basis vector fields in a chart and evaluating.

(ii) $\Rightarrow$ (i) Given $X_1,X_2\in \frak{X}(M)$ it follows from (ii)
and the point value characterization of Colombeau functions
that $\sigma(X_1,X_2)|_W=-\sigma(X_2,X_1)|_W$ on any chart domain $W$. 
This gives skew-symmetry since $\cG$ is a sheaf.
By \cite[1.2.41]{book}, for any $\tilde z\in\psi(W)^\sim_c$,
$\det(\psi_*\sigma(\tilde z))$ is invertible, hence ($\psi_*\sigma(\tilde z)$ being skew-symmetric) 
strictly positive, in $\Rt$. Therefore $\vol(\sigma)|_W$ is strictly positive. Since $W$ was any chart domain,
$\vol(\sigma)$ is strictly positive on $M$. 

(i) $\Rightarrow$ (iii) Using a partition of unity the problem can be reduced to the
case of $M=\R^d$. Now pick any representative $(\sigma_\eps)_\eps$ of $\sigma$. Then
$(\tilde\sigma_\eps)_{ij}:= \frac{1}{2}((\sigma_\eps)_{ij}-(\sigma_\eps)_{ji})$ gives
a skew-symmetric representative of $\sigma$ (cf.\ \cite[Lemma 3.23]{HKK:14}). 

(iii) $\Rightarrow$ (i)
Let $X_1$, $X_2\in \frak{X}(M)$ and let $V$ be any relatively compact open subset
of $M$. Picking a representative as in (iii) it is clear that $\sigma(X_1,X_2)=-\sigma(X_2,X_1)$ 
on $V$. Then the sheaf property of $\cG$ gives skew-symmetry.
\ep

To obtain a characterization of symplectic generalized forms from this result, we will use the following
generalized Poincar\'e lemma (see \cite[3.2.40]{book}):

\bthm\label{poincare} Let $\alpha\in \Gamma_\cG(\Lambda^kT^*M)$ be closed. If $p\in M$ and $U$ is a neighborhood of $p$
that is diffeomorphic to an open ball in $\R^d$ then there exists $\beta\in \Gamma_\cG(\Lambda^{k-1}T^*M)$
such that $\alpha|_U=d\beta|_U$.
\ethm

\bcor\label{sympchar} Let $\sigma\in\cG_2^0(M)$. Then the following are equivalent:
\begin{itemize}
\item[(i)]  $\sigma$ is a generalized symplectic form on $M$.
\item[(ii)] $\vol(\sigma)$ is strictly positive and
for every $p\in M$ there exists an open neighborhood $U$ of $p$ and a representative
$(\sigma_\eps)_\eps$ of $\sigma|_U$ such that each $\sigma_\eps$ is a symplectic form on $U$. 
\end{itemize}
\ecor

\pr (ii) $\Rightarrow$ (i) is immediate from Prop.\ \ref{skewchar} (iii).

(i) $\Rightarrow$ (ii) Given $p\in M$, let $U$ be a relatively compact neighborhood of $p$ 
diffeomorphic (via some chart $\psi$) to an open ball in $\R^d$. Then Prop.\ \ref{skewchar} (iii)
provides a representative $(\tilde \sigma_\eps)_\eps$ of $\sigma$ as well as some 
$\eps_0>0$ and $m\in \N$ such that $\det (\psi_*\tilde\sigma_\eps(x))
>\eps^m$ for $\eps<\eps_0$ and all $x\in \psi(U)$, so $\tilde \sigma_\eps|_U$ is 
a non-degenerate $2$-form. Since $\sigma$ is closed, there exists 
a negligible net $(n_\eps)_\eps$ of $3$-forms such that for all $\eps$ we have $d\tilde\sigma_\eps= n_\eps$.
Thus $dn_\eps=0$ for all $\eps$. By the classical Poincar\'e lemma we may write $n_\eps=dm_\eps$ on $U$,
and (the proof of) Th.\ \ref{poincare} shows that $(m_\eps)_\eps$ is a negligible net of $2$-forms on $U$. 
Thus $\sigma_\eps:=\tilde \sigma_\eps - m_\eps$ is a representative of $\sigma|_U$ with 
$d\sigma_\eps=0$ for all $\eps$. Finally, non-degeneracy of $\sigma_\eps$ for $\eps$ small follows since
$\vol(\sigma)$ is strictly positive by Prop.\ \ref{skewchar}.
\ep

Note that $\sig_\eps$ is non-degenerate if and only if the $n$-fold exterior product 
$\sig_\eps^n := \sig_\eps \wedge \sig_\eps \wedge \ldots \wedge \sig_\eps$ provides a 
volume form on $M$ (with $\dim(M)=d=2n$).

%%%%%%%%%%%%%%%%%%%%%%%%%%%%%%%%%%%%%%%%%%%%%%%%%%%%%%%%%%%%%%%%%%
 \section{A generalized Darboux theorem for non-smooth symplectic forms} \label{sec:Darboux}
%%%%%%%%%%%%%%%%%%%%%%%%%%%%%%%%%%%%%%%%%%%%%%%%%%%%%%%%%%%%%%%%%%

In his fundamental work on the distributional approach to non-smooth mechanics \cite{marsden},
J.\ E.\ Marsden states: ``{\em It is meaningful to talk about generalized
symplectic forms although this does not lead to a satisfactory theory. Clearly Darboux's theorem 
cannot hold in that case.}''

The problem we address in this section is: let $\sig$ be a generalized symplectic form on the manifold $M$
(in the sense described in the previous section).  Can we find {\em generalized} Darboux coordinates? That is, for any $p\in M$ we seek an open neighborhood $U$ of $p$ and a generalized diffeomorphism  $\Phi  \in \cG[U,\R^{2n}]$ such that 
$$
  \sig = \Phi^* \widetilde{\om}, 
$$
where $\widetilde{\om}$ is the canonical $2$-form on $T^*(\widetilde{\R}^n)$.

Weinstein's proof  (based on an isotopy method by Moser from \cite{Moser:65})  outlines the following basic steps to construct Darboux coordinates (cf.\ \cite{Weinstein:71}; see also \cite[Section 3.2]{AM:78}, \cite[Section 3.2]{McDS:98}, or \cite[Chapter 22]{Lee:13}) --- we add remarks on the key issues in extending this to the non-smooth situation below:
\renewcommand{\labelenumi}{\arabic{enumi}.}
\begin{enumerate}

\item We are dealing with a local question, hence  we may assume without loss of generality that $M= \R^{2n}$ and $p=0$.

\item Define the  constant symplectic form $\be$ on $\R^{2n}$ by $\be{(x)} := \sig{(0)}$ for every $x \in \R^{2n}$ and put
$$
    \mu_t := \sig + t \, (\be - \sig) \quad (0 \leq t \leq 1).
$$
Note that $d \mu_t = 0$, since $\sig$ is closed and $\be$ is constant; moreover, $\mu_0
= \sig$ and $\mu_1 = \be$.

\item On some open ball around $0$, say $B_R(0)$, we have that $\mu_t$ is non-degenerate for every $t \in [0,1]$. In the case of generalized $2$-forms we will need a condition, called condition  $(\star)$ below, ensuring that the same holds uniformly for small values of the  parameter $\eps$, if $(\mu^\eps_t)_{\eps \in \,]0,1]}$ is a family representing $\mu_t$.

\item Applying Poincar\'{e}'s lemma on $B_R(0)$ there is a $1$-form $\al$ such that $d \al = \be - \sig$  and $\al(0) = 0$.

\item Define a vector field $X_t$ on $B_R(0)$ by requiring  $\mu_t(X_t,.) = - \al$. Since this corresponds to a pointwise inversion of the matrices representing $\mu_t$,  the non-smooth analogue will depend on the precise information from the ``non-degeneracy'' condition $(\star)$ in a crucial way.

\item Since ${X_t}(0) = 0$ we have an evolution $\theta_{t,s}$ for the time-dependent vector field $X_t$ defined up to $t=1$ on some possibly smaller neighborhood $U$ of $0$ (note that $\theta_{t,0}(0) = 0$ for all $t$ and $\theta_{0,0}$ is the identity). In the non-smooth case, again exploiting details from condition ($\star$) will be essential to establish a generalized evolution correspondingly.

\item The  result of the following calculation  (using \cite[Proposition 22.15]{Lee:13} to obtain the first equality)
\begin{multline*}
  \diff{t} (\theta_{t,0}^*\, \mu_t) = 
  \theta_{t,0}^*(L_{X_t}\, \mu_t) + \theta_{t,0}^*\, \diff{t} \mu_t\\ 
  =   \theta_{t,0}^*(d\, i_{X_t}\, \mu_t + i_{X_t}\, d \mu_t) +  \theta_{t,0}^*(\be - \sig) = \theta_{t,0}^*(- d \al + \be - \sig) = 0
\end{multline*}
 implies  $\theta_{1,0}^*\, \be = \theta_{1,0}^*\, \mu_1 = 
\theta_{0,0}^*\, \mu_0 = \sig$, hence in coordinates corresponding to the diffeomorphism $\theta_{1,0}$ the symplectic form $\sig$ is the constant form $\be$.

\item Finally, we may map $\be$ into the canonical $2$-form (or $\widetilde{\om}$ in the generalized setting) by the Darboux-analogue of symplectic linear algebra (\cite[Th.\ 3.3]{HKK:14}), which in combination with the previous step yields the desired transformation of $\sig$.
\end{enumerate}
\renewcommand{\labelenumi}{(\roman{enumi})}

Turning now to the generalized setting, let $\sig$ be a generalized symplectic form 
on the smooth $2n$-dimensional manifold $M$. The basic condition on $\sig$, which guarantees the existence of generalized Darboux coordinates, is that $\sigma$ should possess a representative $(\sig_\eps)_\eps$ satisfying:

\begin{description}
\item{($\star$)}  The family $(\sig_\eps)_{\eps \in\, ]0,1]}$ of maps $M \to \Lambda^2 T^*M$ 
is {equicontinuous}  and satisfies the following on any chart $(W,\psi)$ with domain $W \subseteq M$ and matrix representation $\Omega^\eps \col W \to M(2n, \R)$ of $\sig_\eps$ with respect to this chart:  $\forall K \Subset W\; \exists C_1, C_2 > 0\; \exists \eta > 0\; \forall \eps \in\, ]0,\eta]$ $\;\forall q \in K$
$$
  0 < C_1 \leq \min  \cA(\Omega^\eps(q)) \leq  \max \cA(\Omega^\eps(q)) \leq C_2.
$$
\end{description}
Here, for a matrix $B \in M(2n,\R)$ we let $\cA(B) := \{ |\la| \mid \la \text{ is an eigenvalue of } B \}$.

\brem \ 
\begin{itemize}
\item[(i)] Condition ($\star$) holds for the typical convolution-type regularization of a uniformly continuous symplectic form. 
\item[(ii)] ($\star$) requires that the $\eps$-parametrized family of volume forms $\sig_\eps^n := \sig_\eps \wedge \sig_\eps \wedge \ldots \wedge \sig_\eps$ has uniform bounds on how a $2n$-dimensional volume is squeezed or stretched.
\item[(iii)]  
If $\sigma$ satisfies ($\star$) and, in addition, is associated to some $\sigma_0\in \Omega^2_{\mathcal D'}(M)$,
then by Arzela-Ascoli there exists some sub-sequence $(\sigma_{\eps_k})_{k\in \N}$ that converges locally uniformly
to a continuous two-form $\sigma_1$. Since $\sigma_\eps \to \sigma_0$ in distributions it follows that $\sigma_0=\sigma_1$
is continuous.  
\item[(iv)] The generalized $2$-forms $\eps dx\wedge d\xi$ and $\frac{1}{\eps} dx\wedge d\xi$  are simple examples
of generalized symplectic forms that do not satisfy ($\star$), yet can be transformed to the canonical symplectic form.
However, no such transformation can be c-bounded.
\item[(v)] Writing $H$ and $y_+$ for embeddings of the Heaviside function resp.\ its primitive into the Colombeau algebra,
consider $\sigma:= (1+H(x))dx\wedge d\xi$. Then (by (iii)), $\sigma$ does not satisfy ($\star$). Nevertheless,
setting $\Psi(y,\eta):=(y-\frac{1}{2}y_+,\eta)$ we obtain $\Psi_*\sigma = dx\wedge d\xi$. Moreover, $\Psi$
is a generalized diffeomorphism, and both $\Psi$ and $\Psi^{-1}$ are associated to Lipschitz continuous transformations.
\end{itemize}
\erem

\bthm Let $\sig$ be a generalized symplectic form on the $2n$-dimensional smooth manifold $M$ with representative $(\sig_\eps)_{\eps \in \,]0,1]}$ such that $(\star)$ holds. Then every $p \in M$ possesses a neighborhood $U$ 
and a generalized  
diffeomorphism $\Phi \in \cG[U,V]$ with $V$ open in $T^*(\R^n)$ such that $\Phi_* (\sig \!\mid_U) = \widetilde{\om}$.  
\ethm

\pr  \emph{Step 1:} This can be carried out as in the above scheme:
we may assume that $M=\R^{2n}$ and that $p=0$. Then each $\sigma_\eps$ is a skew-symmetric and 
non-degenerate matrix.  
In particular, $\Omega^\eps=\sigma_\eps$ in ($\star$).  

\emph{Step 2:}
Setting $\beta:=\sigma(0)$, $\be$ is a symplectic form on the $\Rt$-module 
$T^*({\Rt}^n) = \Rt^n \times \Rt^n$ and clearly $d\beta=0$.
For $t\in[0,1]$, set $\mu_t^\eps := \sig_\eps + t \, (\be_\eps - \sig_\eps)$.
Then $\mu_0 = \sig$ and $\mu_1 = \be$ in  
$\Gamma_\cG(\Lambda^2T^*\R^{n}) = \Omega_\cG(\R^{2n})$, and $d \mu_t = 0$.  

\emph{Step 3:} By assumption ($\star$), the family of matrix-valued maps
$$
\mu^\eps:=(t,q) \mapsto \mu_t^\eps(q), \quad [0,1] \times \R^{2n} \to M(2n, \R) \qquad (\eps\in ]0,1])
$$
is equicontinuous. We show that for small $\eps > 0$, $\mu^\eps$ maps $[0,1]$ times some fixed  neighborhood of $0$ into the set of invertible matrices (corresponding to the non-degeneracy of the associated $2$-forms) with uniform bounds on the operator norms of the matrices and their inverses. The precise claim is as follows:

\begin{description}
\item{($\star\star$)} $\exists R > 0$ $\exists \eps_0 > 0$ $\exists D > 0$ such that $\forall \eps \in \, ]0,\eps_0]$ $\forall t \in [0,1]$ $\forall q \in B_R(0)$ the matrix $\mu_t^\eps(q)$ is invertible and
$$ 
       \opnorm{\mu_t^\eps(q)} \leq D, \quad \opnorm{\mu_t^\eps(q)^{-1}} \leq D.
$$
\end{description}

To see this, note first that uniform boundedness of $\mu_t^\eps(q)$ for $q \in B_1(0)$ (or any relatively compact subset of $\R^{2n}$) and small $\eps$ follows from 
$$
  \opnorm{\mu_t^\eps(q)} \leq 
    (1-t) \opnorm{\sigma_\eps(q)} + t \,\opnorm{\sigma_\eps(0)} 
    \leq \opnorm{\sigma_\eps(q)} + \opnorm{\sigma_\eps(0)},
$$
recalling that the operator norm equals the spectral radius for (skew-symmetric hence) normal operators, and finally applying ($\star$) with $K = \overline{B_1(0)}$. Hence there is some $\eta \in \,]0,1]$ and $C_2 > 0$ such that
$$
    \opnorm{\mu_t^\eps(q)} \leq 2\, C_2 \qquad \forall \eps \in \,]0,\eta]\; \forall t \in [0,1]\;
       \forall q \in B_1(0).
$$
We will also establish invertibility and uniform boundedness of the family of inverses on some ball $B_R(0)$ with $0< R \leq 1$ and for $0 < \eps \leq \eta$. To this end, we note that $- \sigma_\eps(0)^2$ is a (self-adjoint) positive-definite operator, hence 
$$
   \inf\limits_{\norm{v}=1} \norm{\sigma_\eps(0) v}^2  = 
    \inf\limits_{\norm{v}=1} v^T \cdot (- \sigma_\eps(0)^2) \cdot v =
    \min \cA(-\sigma_\eps(0)^2) = \big( \min \cA(\sigma_\eps(0)) \big)^2.
$$ 
Let $C_1$ denote the (positive) lower bound in ($\star$) applied as above with $K = \overline{B_1(0)}$, then the previous observation gives 
$\norm{\sigma_\eps(0) v} \geq \min \cA(\sigma_\eps(0)) \norm{v} \geq C_1 \norm{v}$ and therefore 
$$
     \opnorm{\sigma_\eps(0)^{-1}} \leq \frac{1}{C_1} \quad  \forall \eps \in \,]0,\eta].
$$

In completing the proof of $(\star\star$) we will make use of the following well-known fact about invertibility in normed algebras with unit (e.g., \cite[p.\ 177]{KR:97}): 
If $A$ is invertible and $\norm{B - A} < 1/\norm{A^{-1}}$, then $B$ is invertible and $\norm{B^{-1} - A^{-1}} \leq \norm{B - A} \norm{A^{-1}}^2/(1 - \norm{A^{-1}} \norm{B - A})$. We will apply this to the situation with $A = \sigma_\eps(0)$ and $B = \mu_t^\eps(q)$. 

Since we have shown $C_1 \leq 1/\opnorm{\sigma_\eps(0)^{-1}}$ above, proving a uniform estimate of the form 
\beq \tag{$\Delta$}
  \opnorm{\mu_t^\eps(q) - \sigma_\eps(0)} \leq C_1/2 \qquad 
     \forall \eps \in \, ]0,\eta] \;\forall t \in [0,1] \; \forall q \in B_R(0)
\eeq
will suffice to establish invertibility of $\mu_t^\eps(q)$ and a uniform bound 
$$
   \opnorm{\mu_t^\eps(q)^{-1}} \leq  
   \opnorm{\mu_t^\eps(q)^{-1} - \sigma_\eps(0)^{-1}} + 
      \opnorm{\sigma_\eps(0)^{-1}} \leq 
      \frac{\frac{C_1}{2} \frac{1}{C_1^2}}{1 - \frac{1}{C_1} \frac{C_1}{2}} + 
          \frac{1}{C_1} = \frac{2}{C_1}.
$$
To argue that ($\Delta$) holds for some $0 < R \leq 1$ we simply call on the equicontinuity of 
$(\sigma_\eps)_{\eps\in\,]0,1]}$ to establish the last inequality in the following chain:
$$
   \opnorm{\mu_t^\eps(q) - \sigma_\eps(0)} = 
     \opnorm{(1-t) \big(\sigma_\eps(q) - \sigma_\eps(0)\big)} \leq
     \opnorm{\sigma_\eps(q) - \sigma_\eps(0)} \leq \frac{C_1}{2}.
$$
Therefore ($\star\star$) holds with $D := \max(2 C_2, 2/C_1)$.

\emph{Step 4:}
By Th.\ \ref{poincare} we may construct a generalized $1$-form $\alpha$ on $B_R(0)$ such that
$d\alpha = \beta - \sigma$. Moreover, the proof of \cite[3.2.40]{book} shows that
$\alpha$ has a representative given by
\begin{equation}\label{Hdef}
\alpha_\eps(q)(v) = \int_0^1 t(\beta_\eps - \sigma_\eps)(tq)(q,v)\,dt \qquad (v\in \R^{2n})
\end{equation}
In particular, $\alpha_\eps(0)=0$ for all $\eps$.

\emph{Step 5:}
By non-degeneracy of $\mu_t^\eps$, for each $\eps$ there exists a unique vector field $X^\eps_t$
on $B_R(0)$ such that $\mu_t^\eps(X_t^\eps,\,.\,)=-\alpha_\eps$ on $B_R(0)$ for all 
$t\in [0,1]$ and
all $\eps$. Moderateness of $\alpha_\eps$ and $\mu_t^\eps$, together with boundedness of
$(\mu_t^\eps)^{-1}$ imply that the net $(X^\eps_t)_\eps$ defines a generalized time-dependent vector field on
$B_R(0)$, satisfying global bounds with respect to $t\in [0,1]$. Next, let $R'< R$ and
pick a smooth, compactly supported plateau function $\vphi: \R^{2n}\to \R$ such that
$\vphi\equiv 1$ on a neighborhood of $B_{R'}(0)$. Then $Y^\eps_t(q):=\vphi(q)X^\eps_t(q)$
defines a global time-dependent generalized vector field $Y_t$ that coincides with $X_t$ on
$B_{R'}(0)$.

\emph{Step 6:} 
Denote by $\theta^\eps$ the flow of $Y^\eps_t$, i.e.,
\begin{align*}
\frac{d}{dt}\theta^\eps(t,s,q) &= Y^\eps_t(\theta^\eps(t,s,q))\\
\theta^\eps(s,s,q) &= q
\end{align*}
Since by $(\star)$ and the above construction $Y^\eps_t$ is globally bounded, uniformly
in $t$ and $\eps$, each $\theta^\eps$ is defined on all of $[0,1]\times [0,1]\times \R^{2n}$.
Thus we obtain a c-bounded generalized function $\theta=[(\theta^\eps)_\eps]\in 
\cG[[0,1]\times [0,1]\times \R^{2n},\R^{2n}]$, and for each fixed $(t,s)\in [0,1]^2$
an invertible generalized map $\theta_{t,s} := \theta(t,s,\,.\,)$.

By condition ($\star$) it follows that the family $X^\eps_t$ ($\eps\in ]0,1]$) is equicontinuous,
uniformly in $t$. The same therefore is true of $Y^\eps_t$ ($\eps\in ]0,1]$). Together with the
global boundedness of $Y^\eps_t$ and the continuous dependence of $\theta^\eps$ on the right hand
side of its defining equation this implies that also $\theta^\eps$ ($\eps\in ]0,1]$) is 
equicontinuous. Since $Y^\eps_t(0)=0$, and thereby $\theta^\eps(t,0,0) = 0$ for all $t$,
it follows that there exists some 
$R''<R'$ such that $\theta^\eps(t,0,q)\in B_{R'}(0)$ for all $\eps$, all $q\in B_{R''}(0)$ 
and all $t\in [0,1]$.

\emph{Step 7:} 
By the above preparations, on $B_{R''}(0)$ we can calculate as follows: 
\begin{multline*}
  \diff{t} ((\theta^\eps_{t,0})^*\, \mu^\eps_t) = 
  (\theta^\eps_{t,0})^*(L_{Y^\eps_t}\, \mu^\eps_t) + (\theta^\eps_{t,0})^*\, \diff{t} \mu^\eps_t =
(\theta^\eps_{t,0})^*(L_{X^\eps_t}\, \mu^\eps_t) + (\theta^\eps_{t,0})^*\, \diff{t} \mu^\eps_t
\\ 
  =   (\theta^\eps_{t,0})^*(d\, i_{X^\eps_t}\, \mu^\eps_t + i_{X^\eps_t}\, d \mu^\eps_t) +  (\theta^\eps_{t,0})^*(\be_\eps - \sig_\eps) = 
  (\theta^\eps_{t,0})^*(- d \al_\eps + \be_\eps - \sig_\eps).
\end{multline*}
Integrating, we obtain 
$$
(\theta^\eps_{1,0})^*\beta_\eps = (\theta^\eps_{1,0})^*\, \mu^\eps_1 = (\theta^\eps_{0,0})^*\mu^\eps_0 + \nu_\eps
= \sigma_\eps + \nu_\eps
$$
with $(\nu_\eps)_\eps$ a negligible $2$-form. Thus $\sigma = (\theta^\eps_{1,0})^*\beta$ as a generalized $2$-form.

Consequently, the generalized diffeomorphism $\theta_{1,0}$ transforms $\sigma$ into the constant symplectic form $\beta$. 

\emph{Step 8:} 
Finally, by choosing (according to \cite[Th.\ 3.3]{HKK:14}) a symplectic basis on ${\Rt}^{2n}$ 
corresponding to $\beta$  we transform
$\beta$ to the canonical symplectic form $\widetilde \omega$. 
\ep

%%%%%%%%%%%%%%%%%%%%%%%%%%%%%%%%%%%%%%%%%%%%%%%%%%%%%%%%%%%%%%%%%%
 \section{Generalized Hamiltonian vector fields and Poisson structures} \label{sec:Poisson}
%%%%%%%%%%%%%%%%%%%%%%%%%%%%%%%%%%%%%%%%%%%%%%%%%%%%%%%%%%%%%%%%%%
The natural next step in the development of generalized symplectic geometry
is the introduction of Hamiltonian vector fields and, building on this,
Poisson structures. This final section is devoted to providing these 
notions.

To begin with, we analyze the purely algebraic setup, based on \cite{HKK:14}. 
Given an $\Rt$-module $V$, its dual $\Rt$-module $\mathrm{L}(V,\Rt)$ 
is denoted by $V'$.

\begin{lem}
Let $(V,\sigma)$ be a symplectic $\Rt$-module that is free and of finite rank. 
Then for any $\vphi\in V'$ there exists a unique $h_\vphi\in V$, the
Hamiltonian vector corresponding to $\vphi$, such that
$$
\forall v\in V: \quad \vphi(v) = \sigma(h_\vphi,v).
$$
Furthermore, the map $\vphi \mapsto h_\vphi$, $V'\to V$ is a linear isomorphism
\end{lem}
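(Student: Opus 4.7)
The plan is to define the musical ``flat'' map $\sigma^\flat \col V \to V'$ by $\sigma^\flat(v) := \sigma(v,\cdot)$ and show it is a linear $\Rt$-isomorphism; then $h_\vphi := (\sigma^\flat)^{-1}(\vphi)$ is exactly the element the lemma asks for, and the map $\vphi \mapsto h_\vphi$ is the inverse isomorphism $V' \to V$. The $\Rt$-linearity of $\sigma^\flat$ is immediate from $\Rt$-bilinearity of $\sigma$, and injectivity is literally the non-degeneracy hypothesis: $\sigma^\flat(v)=0$ means $\sigma(v,w)=0$ for all $w\in V$, forcing $v=0$. This already delivers uniqueness of $h_\vphi$.

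The key step, and the only genuine obstacle, is surjectivity of $\sigma^\flat$. Unlike the classical case over a field, one cannot invoke rank/dimension arguments: $\Rt$ has zero divisors, and an injective $\Rt$-linear endomorphism of a free module of finite rank need not be surjective. To circumvent this, I would use the existence of a \emph{symplectic basis} $\{e_1,\ldots,e_n,f_1,\ldots,f_n\}$ of $(V,\sigma)$, which is one of the main results of \cite{HKK:14} recalled in Section~\ref{subsec:symplmod}. Given any $\vphi \in V'$, I would construct $h_\vphi$ explicitly by
$$
h_\vphi := \sum_{j=1}^n \vphi(e_j)\, f_j \,-\, \sum_{j=1}^n \vphi(f_j)\, e_j,
$$
and then verify, using $\sigma(e_j,e_l)=0=\sigma(f_j,f_l)$ and $\sigma(f_j,e_l)=\delta_{jl}$, that $\sigma(h_\vphi,e_k)=\vphi(e_k)$ and $\sigma(h_\vphi,f_k)=\vphi(f_k)$ for every $k$. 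Since $\sigma(h_\vphi,\cdot)$ and $\vphi$ are $\Rt$-linear and agree on the basis, they agree on all of $V$, proving $\sigma^\flat(h_\vphi) = \vphi$.

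Finally, the map $\vphi \mapsto h_\vphi$ is $\Rt$-linear either by reading off linearity from the explicit formula above (the coefficients $\vphi(e_j)$, $\vphi(f_j)$ depend $\Rt$-linearly on $\vphi$), or abstractly as the inverse of the $\Rt$-linear bijection $\sigma^\flat$. Combined with the bijectivity established above, this shows $\vphi \mapsto h_\vphi$ is an $\Rt$-linear isomorphism $V' \to V$, completing the proof.
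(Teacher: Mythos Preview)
Your proof is correct and follows essentially the same route as the paper: both obtain uniqueness from non-degeneracy and establish existence by writing down $h_\vphi$ explicitly in a symplectic basis (the paper additionally phrases this as the matrix inversion $h_\vphi^T = \vphi\cdot\sigma^{-1}$). Your explicit formula differs in sign from the paper's displayed one, and a direct check against the stated convention $\sigma(f_j,e_l)=\delta_{jl}$ shows that yours is the one that actually satisfies $\sigma(h_\vphi,\cdot)=\vphi$.
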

\begin{proof}
Uniqueness is immediate by the non-degeneracy of $\sigma$. To prove existence,
picking any basis of $V$ and the corresponding dual basis on $V'$ we can rewrite the
defining equation for $h_\vphi$ in matrix form as 
$\vphi\cdot v = h_\vphi^T \cdot \sigma\cdot v$. Hence 
\begin{equation}
\label{hamilton}
h_\vphi^T = \vphi\cdot \sigma^{-1}.
\end{equation}
More explicitly, in terms of a symplectic basis $(e_1,\dots,e_n,f_1,\dots,f_n)$ of $V$, 
we have
$$
h_\vphi = \sum_{i=1}^n (\vphi(f_i)e_i - \vphi(e_i)f_i).
$$
The final claim is immediate from the construction.
\end{proof}
Based on this result, we can introduce the Poisson bracket 
\begin{align*}
 \{\,\,,\,\}:V'\times V' &\to \Rt \\
 \{\vphi,\psi\} &= \sigma(h_\vphi,h_\psi)
\end{align*}
As in the vector space setting it is easily seen that $\{\,\,,\,\}$ is skew-symmetric,
non-degenerate and satisfies the Jacobi-identity. Thus we obtain:
\begin{prop}
Let $(V,\sigma)$ be a symplectic $\Rt$-module that is free and of finite rank.
Then $(V',\{\,\,,\,\})$ is a symplectic $\Rt$-module that is free and of the 
same rank as $V$.
\end{prop}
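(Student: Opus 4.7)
The plan is to verify in turn the three defining properties of a symplectic $\Rt$-module for the pair $(V',\{\,,\,\})$ — bilinearity, skew-symmetry, non-degeneracy — and then deduce freeness and the rank statement from the isomorphism $V'\cong V$ provided by the lemma.

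First I would record that $\Rt$-bilinearity of $\{\,,\,\}$ is immediate: by the lemma the assignment $\vphi\mapsto h_\vphi$ is $\Rt$-linear, and $\sigma$ is $\Rt$-bilinear, so the composition $(\vphi,\psi)\mapsto \sigma(h_\vphi,h_\psi)$ is $\Rt$-bilinear. Skew-symmetry is equally direct:
\[
\{\vphi,\psi\} = \sigma(h_\vphi,h_\psi) = -\sigma(h_\psi,h_\vphi) = -\{\psi,\vphi\}
\]
by skew-symmetry of $\sigma$.

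For non-degeneracy, suppose $\vphi\in V'$ satisfies $\{\vphi,\psi\}=0$ for every $\psi\in V'$. Then $\sigma(h_\vphi,h_\psi)=0$ for all $\psi$. Since $\psi\mapsto h_\psi$ is a linear isomorphism $V'\to V$ by the lemma, this is equivalent to $\sigma(h_\vphi,w)=0$ for every $w\in V$. Non-degeneracy of $\sigma$ now forces $h_\vphi=0$, and then injectivity of the same isomorphism gives $\vphi=0$.

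It remains to verify that $V'$ is free and of the same rank as $V$. Since $V$ is free of finite rank $2n$ (by the results from Section~\ref{subsec:symplmod} recalled in the excerpt, any symplectic free $\Rt$-module of finite rank has even rank $2n$), fixing a symplectic basis of $V$ yields a dual basis of $V'$ in the standard manner, so $V'$ is free of rank $2n$. Alternatively, since $\vphi\mapsto h_\vphi$ is an $\Rt$-linear isomorphism $V'\to V$, one can simply transport any basis of $V$ back to $V'$. Either way $V'$ is free of rank $2n$, matching that of $V$, which completes the proof. No step is really an obstacle here; the only mild subtlety is to remember to invoke the bijectivity of $\vphi\mapsto h_\vphi$ (rather than non-degeneracy of $\sigma$ alone) in the non-degeneracy argument, since the Poisson bracket is defined on $V'$ but the underlying symplectic form lives on $V$.
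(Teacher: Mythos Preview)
Your proof is correct and follows the same route the paper indicates: the paper simply remarks that, as in the vector-space case, $\{\,,\,\}$ is easily seen to be skew-symmetric and non-degenerate, and you have supplied exactly those verifications together with the freeness/rank claim via the isomorphism $\vphi\mapsto h_\vphi$ from the lemma. The paper additionally notes the Jacobi identity, but that is not part of the definition of a symplectic $\Rt$-module and is not needed for the proposition.
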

Turning now to the manifold setting, we have:
\begin{thm}
Let $(M,\sigma)$ be a generalized symplectic manifold. Then the mapping
\begin{align*}
 \sigma_\flat: \mathfrak{X}_\cG(M) &\to \Omega^1_\cG(M) \\
 \sigma_\flat(X)(Y) &:= \sigma(X,Y) \quad (X,Y \in \mathfrak{X}_\cG(M))
\end{align*}
is a $\cG(M)$-linear isomorphism. Its inverse will be denoted by $\sigma^\sharp$.
\end{thm}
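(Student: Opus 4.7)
The $\cG(M)$-linearity of $\sigma_\flat$ is immediate from the $\cG(M)$-bilinearity of $\sigma$ viewed as an element of $\cG_2^0(M)$, so the content of the theorem is bijectivity. My plan is to reduce to a local matrix computation and then invoke the sheaf property of $\cG$; pointwise bijectivity is already covered by the $\Rt$-module lemma recalled at the start of this section, so the task is really to propagate it to generalized sections.

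Fix any chart $(W,\psi)$ with coordinates $x^1,\ldots,x^{2n}$ and let $\sigma_{ij}:=\sigma(\partial_i,\partial_j)\in\cG(W)$. By Prop.\ \ref{skewchar}, $\vol(\sigma)$ is strictly positive, so $\det(\sigma_{ij})$ is strictly nonzero in $\cG(W)$. The Colombeau invertibility criterion then yields an inverse $(\sigma^{ij})\in M(2n,\cG(W))$ via the adjugate formula. Writing a generic $X = X^i\partial_i\in\mathfrak{X}_\cG(W)$, the identity $\sigma_\flat(X)|_W = (\sigma_{ij}X^i)\,dx^j$ shows that in this chart $\sigma_\flat$ corresponds to the invertible linear system $\sigma_{ij}X^i = \omega_j$.

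For injectivity, $\sigma_\flat(X)=0$ forces $\sigma_{ij}X^i=0$ on every chart, whence $X^i=0$ in $\cG(W)$ by invertibility of $(\sigma_{ij})$, and $X=0$ on $M$ by the sheaf property. For surjectivity, given $\omega\in\Omega^1_\cG(M)$ with local expression $\omega|_W=\omega_j\,dx^j$, solving the above system in $\cG(W)$ produces a unique $X_W\in\mathfrak{X}_\cG(W)$ with $\sigma_\flat(X_W)=\omega|_W$; on any overlap $W\cap W'$ the local injectivity just established forces $X_W$ and $X_{W'}$ to coincide, and the sheaf property of $\mathfrak{X}_\cG$ glues the family to a global $X\in\mathfrak{X}_\cG(M)$ with $\sigma_\flat(X)=\omega$.

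The only point that deserves care is moderateness and $\cN$-independence of the entries of $(\sigma^{ij})$, but this is routine once $\det(\sigma_{ij})$ is invertible in $\cG(W)$, which is precisely the content of strict positivity of $\vol(\sigma)$ in Prop.\ \ref{skewchar}. No further obstacle is expected, since the whole argument rests on the algebraic pointwise lemma combined with the sheaf structure of $\cG$ and the adjugate construction of matrix inverses in the Colombeau setting.
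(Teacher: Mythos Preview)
Your proof is correct and follows essentially the same route as the paper: reduce to a chart, use invertibility of the matrix $(\sigma_{ij})$ coming from strict positivity of $\vol(\sigma)$, construct the local inverse via $X^T=\alpha\cdot\sigma^{-1}$, and glue by the sheaf property. The only cosmetic difference is that for injectivity the paper appeals directly to pointwise non-degeneracy at compactly supported generalized points (the defining property of $\sigma$), whereas you pass through matrix invertibility in $\cG(W)$; by Prop.~\ref{skewchar} these are equivalent, so nothing of substance changes.
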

\begin{proof}
Since $Y\mapsto \sigma(X,Y)$ is $\cG(M)$-linear, $\sigma_\flat(X)\in \Omega^1_\cG(M)$ for
any $X\in \mathfrak{X}_\cG(M)$ (see \cite[3.2.27]{book}), and clearly $\sigma_\flat$
is $\cG(M)$-linear. To prove injectivity, suppose that $\sigma(X,Y)=0$ for all 
$Y\in \mathfrak{X}_\cG(M)$. Then for any chart $(W,\psi)$ it follows that 
$\psi_*(\sigma)(\tilde z)(\psi_*(X)(\tilde z),w)=0$ for any $\tilde z\in \psi(W)^\sim_c$
and any $w\in \Rt^{2n}$. Thus non-degeneracy gives $\psi_*(X)(\tilde z)=0$,
implying that $X=0$.

To show surjectivity, by the sheaf property it suffices to consider the case $M=\R^{2n}$.
Let $\alpha\in \Omega^1_\cG(\R^{2n})$. Then, using \eqref{hamilton}, in matrix notation
we may set $X_\alpha^T := \alpha\cdot \sigma^{-1}$. By the positivity of $\det(\sigma)$
in $\cG(\R^{2n})$ (Cor.\ \ref{sympchar}), this
defines a generalized vector field $X_\alpha\in \mathfrak{X}_\cG(M)$, and by construction
$\sigma_\flat(X_\alpha)=\alpha$.
\end{proof}
\begin{rem} Note that the previous result did not make use of generalized Darboux
coordinates (which in general may not be available), but is valid for arbitrary
generalized symplectic manifolds.
\end{rem}

Based on this result, we may now define:
\begin{defin} 
Let $(M,\sigma)$ be a generalized symplectic manifold. For any $f\in \cG(M)$,
the generalized vector field $H_f:=\sigma^\sharp(df)\in \mathfrak{X}_\cG(M)$
is called the Hamiltonian vector field of $f$. 

Moreover, for $f$, $g\in \cG(M)$, the Poisson bracket of $f$ and $g$ is
given by $\{f,g\}:=\sigma(H_f,H_g)$.
\end{defin}

Analogous to the smooth setting, the Poisson bracket induces an $\Rt$-Lie-algebra
structure on $\cG(M)$.

Summing up, the above constructions provide the foundations for a regularization-based
approach to non-smooth symplectic geometry. Contrary to the distributional setting,
the additional flexibility of Colombeau's theory allows one to retain the basic
structure of the smooth setting. Building on these constructions one may now
systematically explore applications, in particular in non-smooth mechanics.
In particular, previous work in this direction (e.g., \cite{HO:99,KKO:08}) can now be viewed from a
unifying perspective. Finally, we hope that this approach will be useful in studying the
propagation of singularities for pseudo-differential
operators with non-smooth principal symbol on differentiable manifolds.

%%%%%%%%%%%%%%%%%%%%%%%%%%%%%%%%%%%%%%%%%%%%%%%%%%%%%%%%%%%%%%%%%%
 \subsection*{Acknowledgment}
%%%%%%%%%%%%%%%%%%%%%%%%%%%%%%%%%%%%%%%%%%%%%%%%%%%%%%%%%%%%%%%%%%

This work was supported by project P25236 of the
Austrian Science Fund and  projects 174024 
of the Serbian Ministry of Science, and
114-451-3605 of the Provincial Secretariat for Science.

%%%%%%%%%%%%%%%%%%%%%%%%%%%%%%%%%%%%%%%%%%%%%%%%%%%%%%%%%%%%%%%%%%
%%%%%%%%%%%%%%%%%%%%%%%%%%%%%%%%%%%%%%%%%%%%%%%%%%%%%%%%%%%%%%%%%%

%%%%%%%%%%%%%%%%%%%%%%%%%%%%%%%%%%%%%%%%%%%%%%%%%%%%%%%%%%%%%%%%%%
%%%%%%%%%%%%%%%%%%%%%%%%%%%%%%%%%%%%%%%%%%%%%%%%%%%%%%%%%%%%%%%%%%
 \end{document}